\newcommand{\bS}{\mathbb S}
\newcommand{\ZZ}{\mathbb Z}
\newcommand{\frakp}{\mathfrak p}
\newtheorem{theorem}{Theorem}
\newtheorem{lemma}{Lemma}
\theoremstyle{definition}
\newtheorem{definition}{Definition}
\theoremstyle{remark}
\newtheorem*{remark}{Remark}
\title{Rigidity of a family of spherical conical metrics} 
\author{Xuwen Zhu\\UC Berkeley}
\date{}
\begin{document}

\maketitle

\begin{abstract}
We study the deformation of spherical conical metrics with at least 
some of the cone angles larger than $2\pi$. We show in this note via synthetic geometry that 
for one family of such metrics, there is local rigidity in the choice of cone positions if angles are fixed. This gives an evidence of the analytic obstruction considered in recent works of Mazzeo and author~\cite{MZ, MZ2}. 
\end{abstract}

\section{Introduction}

The study of constant curvature metrics with singularities has seen a long and rich history, where a lot of interesting questions are still not completely answered. 
Among them is the following singular uniformization question:
given a compact Riemann surface $M$, a collection
of distinct points $\frakp = \{p_1, \ldots, p_k\} \subset M$ and a collection of positive real numbers
$\beta_1, \ldots, \beta_k$, is it possible to find a metric $g$ on $M$ with constant
curvature and with conic singularities with prescribed cone angles $2\pi \beta_j$ at
the points $p_j$?  Here the sign of its curvature is determined by the `conic' Gauss-Bonnet formula
\begin{equation}\label{e:GB}
\frac{1}{2\pi}\int_M K\, dA = \chi(M, \vec \beta): = \chi(M) + \sum_{j=1}^k (\beta_j-1).
\end{equation}

When $\chi(M, \vec \beta)\leq 0$, the existence and uniqueness of such solutions are proved by McOwen~\cite{McOwen}. In the spherical $K=1$ case with all cone angles less than $2\pi$, Troyanov \cite{Tr} gave a set of linear inequalities on the $\beta_j$'s
which are necessary and sufficient for existence; Luo and Tian \cite{LT} later proved uniqueness of the solution
in this angle regime. 
For all the above cases, there is no restriction on the position of cone points. Deformation theory for these cases has been studied by Mazzeo and Weiss~\cite{MW} and it is shown that the metrics have smooth dependence on cone angles and positions. 

When $K=1$ with at least some of the cone angles bigger than $2\pi$, the story is much more complicated. Recently Mondello and Panov~\cite{MP} discovered that when $M=\bS^{2}$ the cone angles are constrained by a set of linear inequalities 
\begin{equation}\label{MonPan}
 d_1( \vec \beta - \vec 1, \ZZ^k_{\mathrm{odd}}) \geq 1,
\end{equation}
and showed the existence when the strict inequality holds; the boundary cases have been considered in~\cite{Dey, Ka, Ere1}. 
The same two authors~\cite{MP2} also showed that 
when $M\neq \bS^{2}$, the condition $\chi(M, \vec\beta)>0$ is sufficient for existence.
 In either cases, one is unable to specify the marked conformal class, i.e., the location of the points 
$\frakp$.

In this paper we consider the deformation of the following metrics with four conical points on $\bS^{2}$: 
\begin{equation}\label{e:beta}
2\pi\vec \beta=(\alpha, \beta, \alpha+\beta, 4\pi),\ \alpha, \beta, \alpha+\beta\notin 2\pi\ZZ,
\end{equation}
and show that there is local rigidity in the location of cone points. Note here~\eqref{e:beta} satisfies the equality in~\eqref{MonPan}, and such angle combinations lie on the codimension-two boundary of the admissible region.

For any fixed $\vec \beta$ satisfying~\eqref{e:beta}, there exists a real one-parameter family of cone point positions $\{\frakp_{t}, 0<t<\pi\}$ on $\bS^{2}$ such that there exists a spherical conical metric $g_{t}$ with angles $2\pi\vec\beta$ on $(\bS^{2}, \frakp_{t})$. The geometric realization of such metrics is obtained by gluing two spherical footballs of angles $\alpha$ and $\beta$ along part of a meridian with one end at the south pole (see Figure~\ref{pic0}) where $t$ parametrizes the length of the cut. 

\begin{figure}[!]
\centering
\includegraphics[width=0.7\textwidth]{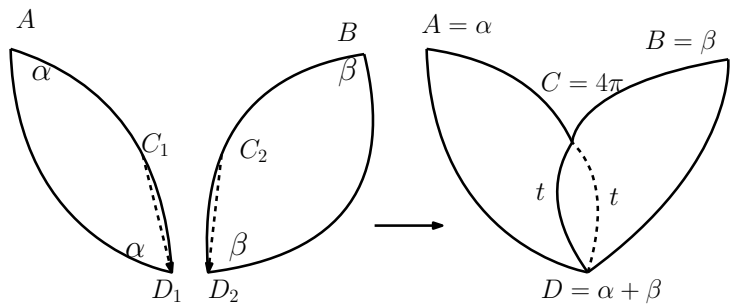}
 \caption{A spherical metric with cone angles $\alpha, \beta, \alpha+\beta, 4\pi$ can be obtained by gluing together two spherical footballs along a slit of length $t$. We cut open each football along a geodesic of length $t$, and glue points $C_{1}$ with $C_{2}$ to get a cone point $C$ with angle $4\pi$, $D_{1}$ with $D_{2}$ to get the point $D$ with angle $\alpha+\beta$, and glue the two pairs of geodesics between $C_{1}D_{1}$ and $C_{2}D_{2}$.}
\label{pic0}
\end{figure}

In this note we approach the rigidity of such metrics via synthetic geometry. We decompose the surface along geodesics to get four spherical triangles, see Figure~\ref{pic7}. 
\begin{figure}[!h]
\includegraphics[width=\textwidth]{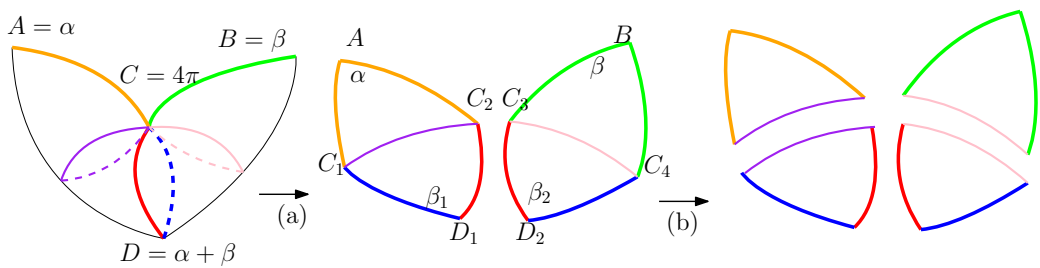}
 \caption{The process of cutting the surface into four spherical geodesic triangles (here geodesics with the same color glue together): (a) cut along geodesics $AC, BC$ and two geodesics connecting $CD$ to get two surfaces, each with four sides, where $C_{1}-C_{4}$ glue back to cone point $C$ and $D_{1}-D_{2}$ glue to cone point $D$; (b) further cut along two geodesics $C_{1}C_{2}$ and $C_{3}C_{4}$ to obtain four triangles.}
 \label{pic7}
\end{figure}
\begin{definition}
A \textbf{triangulated metric} is defined to be a spherical conical metric with the same geodesic decomposition as in Figure~\ref{pic7} (not necessarily with the same cone angles or geodesic lengths). For two triangulated metrics $h$ and $g$, $h$ is called \textbf{closed to $g$} if lengths of all boundaries of the four geodesic triangles of $h$ are close to the corresponding ones of $g$.
\end{definition}

In most cases, a small perturbation of metrics preserve the geodesic decomposition, and hence we expect all nearby spherical conical metrics  to be triangulated and close to each other in the sense defined above. 

The space of triangulated metrics are parametrized by six (independent) lengths as long as they satisfy spherical triangle inequalities, see Figure~\ref{pic7} for the color-coded geodesic pairs.
If we restrict to such metrics with four fixed cone angles, which imposes four equations on six parameters, then the space of all such metrics is two dimension for a generic angle set $\vec \beta$.  We remark here that the conformal class of four marked points on sphere is determined by the (complex) cross ratio which also gives a (real) two-dimension space. Therefore in general we expect that the neighborhood of triangulated metrics contains all possible perturbations in the usual sense.

We will show that for $\vec \beta$ as in~\eqref{e:beta}, the glued footballs are the only possible triangulated metrics under perturbation, hence all such metrics form a one-dimensional space.
And this gives the local rigidity in the geometric sense.

\begin{theorem}
For any fixed $\vec \beta$ as in~\eqref{e:beta} and $t\in (0, \pi)$, if $(\bS^{2}, h)$ is a  triangulated spherical metric with the same cone angles and $h$ is close to $g_{t}$, then $h$ is isometric to $g_{s}$ for some $s$. 
\end{theorem}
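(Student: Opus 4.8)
The plan is to set up coordinates on the moduli space of triangulated metrics with the prescribed cone angles and show that the football family exhausts it near $g_t$. First I would name the six geodesic lengths appearing in Figure~\ref{pic7}: the two shared cuts $C_1C_2$ and $C_3C_4$ (call their common length $a$, since they glue in pairs and the symmetry of the football configuration forces equality there), the four edges emanating from $C$ and $D$, and the arc $t$. For each of the four spherical triangles, the spherical law of cosines relates its three side lengths to its three angles; summing the appropriate triangle angles at each vertex gives the cone-angle constraints: the angles around $C$ must sum to $4\pi$, the angles around $D$ must sum to $\alpha+\beta$, and the angles at $A$ and $B$ must sum to $\alpha$ and $\beta$ respectively. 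I would write these four equations explicitly in terms of the six lengths using the spherical trigonometric identities, obtaining a real-analytic map $F:\RR^6 \supset U \to \RR^4$ whose zero set is the local moduli space, with the football family $\{g_s\}$ a known one-parameter curve inside $F^{-1}(0)$.

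The heart of the argument is to show $F^{-1}(0)$ is exactly one-dimensional near $g_t$, i.e.\ that $dF$ has rank $4$ there (so the implicit function theorem pins the solution set to a curve, which must then coincide with the football curve). This is where the special angle combination $2\pi\vec\beta = (\alpha,\beta,\alpha+\beta,4\pi)$ enters decisively. The relation $\alpha + \beta = (\alpha+\beta)$ among the first three angles, together with the angle $4\pi$ at $C$, is what makes the football gluing work in the first place, and it should manifest in the linearization as a rank drop that is nonetheless not worse than one: generically $dF$ would have rank $4$ and the solution set would be two-dimensional, but the boundary condition~\eqref{MonPan} being an equality forces a degeneration. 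I would compute the Jacobian of $F$ along the football curve, exploiting the reflection symmetry of $g_t$ (swap the two footballs) to block-diagonalize the $4\times 6$ matrix into symmetric and antisymmetric parts, and check that the antisymmetric block is nondegenerate while the symmetric block has a one-dimensional kernel spanned by the tangent to the football family. Concretely this reduces to showing a $3\times 3$ or $4\times 4$ determinant built from $\cos$ and $\sin$ of the football's edge lengths is nonzero for all $t\in(0,\pi)$ and all admissible $\alpha,\beta$, which I expect to follow from a monotonicity or sign argument in $t$.

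The main obstacle I anticipate is the Jacobian computation: the four cone-angle equations are transcendental, the triangles share edges in a way that couples all six variables, and one must verify the rank condition uniformly over the full parameter range $t\in(0,\pi)$ and over all $\alpha,\beta$ with $\alpha,\beta,\alpha+\beta\notin 2\pi\ZZ$ — including near the degenerate limits $t\to 0$ and $t\to\pi$ where some triangles collapse. Using the reflection symmetry to split the problem is what should make this tractable: on the antisymmetric side the collapse of a shared edge is controlled, and on the symmetric side the residual kernel is forced to be the football direction by a dimension count once one knows the football family genuinely lives in $F^{-1}(0)$ (which is the content of the geometric construction in Figure~\ref{pic0}). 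Once the rank is established, the conclusion is immediate: $F^{-1}(0)$ near $g_t$ is a $1$-manifold containing the real-analytic curve $\{g_s\}$, so $h$ lies on that curve and is isometric to some $g_s$.
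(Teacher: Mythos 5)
Your strategy has a structural flaw that no Jacobian computation can repair. You set up $F:\RR^6\to\RR^4$ (four cone-angle constraints on six lengths) and want to conclude via the implicit function theorem that $F^{-1}(0)$ is a curve near $g_t$. But a $4\times 6$ matrix has kernel of dimension at least $2$, so $dF$ always annihilates at least one direction transverse to the one-dimensional football family; and if $dF$ had full rank $4$ (as you first assert), the implicit function theorem would force $F^{-1}(0)$ to be a \emph{two}-manifold, which is exactly the generic picture that the theorem denies for this special $\vec \beta$. Your later claim that the symmetric block leaves only a one-dimensional total kernel cannot be right either, since that would require $\mathrm{rank}\, dF=5>4$. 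The content of the theorem is precisely that the zero set is \emph{smaller} than the expected dimension $6-4=2$, which means $F$ is degenerate along the football curve and first-order information is intrinsically insufficient: one must show that $F$ is nonvanishing to second order along the extra kernel direction(s).

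That second-order input is what the paper's synthetic argument supplies. Lemma~\ref{symmetry} shows that for a spherical triangle with one side and its opposite angle fixed, the sum of the remaining two angles attains a \emph{strict} local extremum at the isosceles configuration; combined with Napier's analogies this yields the strict inequality~\eqref{e:inequality}. Consequently any perturbation breaking the symmetry of the football configuration --- splitting the angle at $D$ unevenly between the two pieces (Lemma~\ref{lemma2}, Step~1) or taking $\ell_{3}\neq\ell_{4}$ (Lemma~\ref{lemma1}, Step~2) --- strictly increases or strictly decreases the total angle at the $4\pi$ cone point, so the constraint cannot hold. If you wish to keep an analytic formulation, you would need to prove that the angle-sum function has a definite Hessian in the transverse kernel direction, uniformly in $t$, which is in essence a repackaging of Lemma~\ref{symmetry}; as written, your proposal stops at the linearization, where the rigidity is invisible.
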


Such geometric rigidity has appeared in the case of a spherical football with noninteger cone angles. 
By the classical proof of Troyanov~\cite{Tr2}, the only possible configuration in this case is when all the geodesics connecting two cone points are of length $\pi$. By the perturbation argument in~\cite{MW}, the spherical football is the only metric with rigidity when all cone angles are less than $2\pi$.
However, when there are more than three cone points with some of angles bigger than $2\pi$, such rigidity is far from clear. This note intends to give a family of explicit metrics in this regime.

The angle combination~\eqref{e:beta} was discussed by Chen, Wang, Wu and Xu~\cite[Example 4.7]{CWWX} and it was shown that if such a metric has reducible monodromy, then the positions of cone points with the first three angles $\alpha, \beta, \alpha+\beta$ determine the position of the 4th point. And there is a real 1-parameter family of such metrics where the parameter comes from the scaling of the character 1-form. Since such angle combination lies on the boundary of the admissible region of~\eqref{MonPan}, 
from~\cite[Corollary 2.25(I)]{MP}, this implies that the monodromy of such conical metrics is necessarily reducible. In particular, this implies that there is rigidity in the conformal class of the quadruply punctured sphere for this angle combination. 

This note intends to give a geometric realization of such rigidity, and to our knowledge, this is the first proof using elementary spherical geometry.  We also hope to give more insight in understanding the rigidity of cone metrics as solutions to the curvature equation with prescribed singularities.

We observe that if we write the metrics on a football with angle $2\pi\alpha$ in geodesic coordinates as 
$$
dr^{2}+\alpha^{2}\sin^{2}r d\theta^{2},
$$
then $\cos r$ is an eigenfunction of its Laplacian with eigenvalue 2. Moreover, when two footballs (not necessarily with same angles) glue together, this eigenfunction also glue to give a global one.
Therefore the metrics we consider here all satisfy the condition that number $2$ lies in the Friedrichs extension of the Laplacian, hence from~\cite{MZ2} such partial rigidity in cone positions is expected as a result of the obstruction in solving the curvature equation.

The study of constant curvature conical metrics has seen a lot of recent development.  One approach is through complex analysis, see~\cite{EG, EGT, EGT2, UY}. For metrics with special monodromy which is of particular interest of this note, see the works of Xu and collaborators~\cite{CWWX, SCLX, SLX} and Eremenko~\cite{Ere1}.  We also 
mention here the variational approach by Malchiodi and collaborators~\cite{BMM, BM, CM2, Carlotto} and the Leray-Schauder degree counting method by Chen and Lin~\cite{CL2, CL3}. We refer the readers to~\cite{MZ2} for a more comprehensive overview. We also mention here another type of closely related objects called HCMU metrics. This exhibit a similar obstruction in existence~\cite{Chen}, and geodesic decomposition was used to analyze such metrics~\cite{CCW}. 

This paper is organized as follows. In~\S\ref{s:equalangle} we consider the case when $\alpha=\beta$, for which the computation is simpler but keeps the essential feature of the proof. In~\S\ref{s:full} we give the proof of the general case.

\noindent\textbf{Acknowledgement: } The author would like to thank Rafe Mazzeo and Bin Xu for many useful discussions and suggestions.

\section{The case $\alpha=\beta$}\label{s:equalangle}
We start with the example when $\alpha=\beta$ to simplify the computation. We first show that by assuming some symmetry, the only possibility would be the glued footballs.
\begin{lemma}\label{lemma1}
If $\beta_{1}=\beta_{2}(=\beta)$, then $h$ is equal to $g_{s}$ for some $s$.  
\end{lemma}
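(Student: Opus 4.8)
The plan is to work directly with the six geodesic lengths parametrizing a triangulated metric, specialize to the symmetric situation $\beta_1=\beta_2=\beta$ (Figure~\ref{pic7}), and show that the constraints imposed by the prescribed cone angles together with the imposed reflection symmetry force all ``horizontal'' geodesics to have length $\pi$, which is exactly the defining property of a glued football $g_s$. Concretely, I would first set up the four spherical triangles obtained in Figure~\ref{pic7}(b): each triangle has three sides and three angles, related by the spherical law of cosines (both the side form and the angle/dual form). Labeling the side lengths and vertex angles, the gluing identifications (same-colored geodesics) give: (i) length-matching equations between paired sides of adjacent triangles, and (ii) angle-sum equations at each cone point, where the sum of the relevant triangle angles meeting at $C$ equals $4\pi/2\pi\cdot\pi\cdot(\ldots)$ — more precisely the cone angle $2\pi\beta_j$ at each vertex is the sum of the spherical-triangle angles glued there, and at the $4\pi$ vertex $C$ all the small angles around it sum to $4\pi$.

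Next I would impose the symmetry hypothesis $\beta_1=\beta_2=\beta$. Under this hypothesis the two footballs being glued are congruent, so there is an isometric involution of the configuration swapping them; a triangulated metric $h$ close to $g_t$ inheriting (or forced into) this symmetry means the four triangles come in two congruent pairs, collapsing the six length parameters to essentially three, say the cut length $\ell$, the shared meridian-type length, and one more. The four cone-angle equations then reduce to a much smaller system. The key computation is to write the angle at $C$ (the $4\pi$ point): around it we have four triangle-angles, and by symmetry these are two equal pairs; the equation ``their sum $=4\pi$'' combined with the two spherical-triangle relations expressing these angles in terms of $\ell$ and the football angle $\alpha$ will, I expect, pin down the remaining free lengths to the football values, i.e. force the CD-geodesics and the top geodesics to have length $\pi$. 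Once every ``horizontal'' geodesic has length $\pi$, the two halves are genuine spherical footballs of angles $\alpha$ and $\beta$ cut along a slit of some length $s$, hence $h=g_s$.

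The main obstacle I anticipate is the angle-sum equation at the $4\pi$ cone point: unlike an angle $<2\pi$, here the ``triangle angle'' can wrap around, so one must be careful that the relevant angles are the correct branch (each genuinely the interior angle of a small spherical triangle, in $(0,\pi)$ or appropriately in $(0,2\pi)$) and that their sum is literally $4\pi$ and not $4\pi$ minus a defect — this is where closeness to $g_t$ is essential, to fix the combinatorics and the branch. A secondary technical point is verifying that the reduced system of spherical trigonometric identities has, near the football solution, no other root; I would handle this by linearizing the cone-angle constraints about $g_t$ and checking the Jacobian in the remaining length variables is nonsingular (using $\alpha,\beta,\alpha+\beta\notin 2\pi\ZZ$ and $t\in(0,\pi)$), so that the implicit function theorem gives local uniqueness, matching the one-parameter family $\{g_s\}$.
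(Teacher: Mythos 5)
Your plan has a genuine gap at its central step: you assert that the hypothesis $\beta_1=\beta_2$ makes ``the two footballs being glued congruent,'' so that an isometric involution swaps the two pieces and the four triangles come in two congruent pairs. But $h$ is only assumed to be a triangulated metric close to $g_t$; its two $4$-sided pieces are not footballs, and $\beta_1=\beta_2$ is merely a statement about how the cone angle at $D$ splits between them. It does not by itself force $\ell_1=\ell_2$, and there is a genuine competing configuration to exclude: the cosine law only gives $\cos\ell_5=\cos\ell_3\cos\ell_4+\sin\ell_3\sin\ell_4\cos\beta=\cos\ell_6$, after which the two isosceles triangles $AC_1C_2$ and $BC_3C_4$ (same base, same apex angle) can either be congruent or be the two complementary triangles of a spherical bigon, with $\ell_1+\ell_2=\pi$ and $\ell_1\neq\ell_2$. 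The paper must explicitly rule out this second case by playing two cosine-law computations against each other (deriving $\frac{\cos\ell_5-1}{\cos\beta-1}<1$ from the triangles at $A$ and $B$, and $\frac{\cos\beta-1}{\cos\ell_5-1}\leq 1$ from the triangles at $D$, a contradiction). Your proposal skips this entirely by assuming the symmetry that has to be proven.

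The second issue is that your endgame --- linearize the cone-angle constraints and invoke the implicit function theorem --- is only sketched, and the nondegeneracy of the Jacobian is exactly the delicate point for these angle combinations: the angles \eqref{e:beta} sit on the boundary of the Mondello--Panov region and the linearized problem is obstructed, which is why only a one-parameter family survives. The paper avoids any transversality computation by proving an exact monotonicity statement (Lemma~\ref{symmetry}): for a spherical triangle with one side and its opposite angle fixed, the sum of the remaining two angles is extremized precisely at the isosceles triangle. This converts the angle-sum-equals-$4\pi$ constraint at $C$ into the equality case of an inequality and forces $\ell_3=\ell_4$, after which the pieces are bigons and $h=g_s$. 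To salvage your route you would need to (i) prove, not assume, that the two pieces are congruent, and (ii) either carry out the Jacobian computation in the length variables or replace it with a monotonicity argument of this kind.
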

\begin{proof}
In the proof we focus on the intermediate step of the triangulation given in Figure~\ref{pic7}, where we have two pieces, each of which is a spherical domain with four sides. 
First connect $C_{1}C_{2}$ and $C_{3}C_{4}$ by geodesics and consider their lengths, see Figure~\ref{pic2}. 
\begin{figure}[h]
\centering
\includegraphics[width=0.7\textwidth]{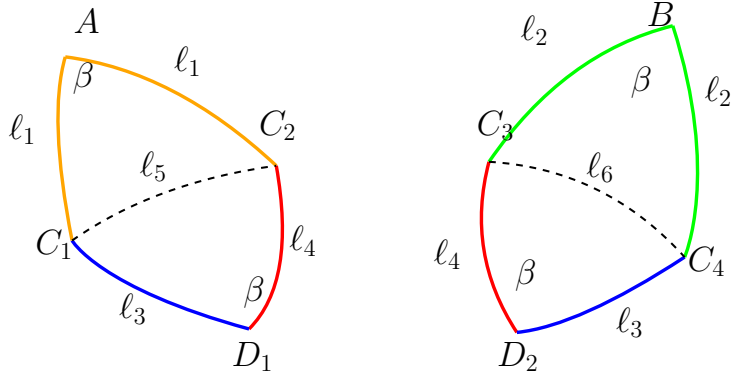}
 \caption{When $\beta_{1}=\beta_{2}$, we show that the two pieces should be two identical spherical bigons.}
 \label{pic2}
\end{figure}

If $\beta_{1}=\beta_{2}=\beta$, then $\ell_{5}=\ell_{6}$ by spherical cosine law
\begin{equation}\label{e:cosine}
\cos \ell_{5}=\cos\ell_{3}\cos \ell_{4}+\sin \ell_{3}\sin\ell_{4}\cos\beta=\cos \ell_{6}.
\end{equation} 
Hence for the two spherical triangles $A_{1}C_{1}C_{2}$ and $BC_{3}C_{4}$, there are two possibilities: (a) they are identical; (b) they are not identical but piece together to a spherical bigon. 

We first show that case (b) is not possible. If $\ell_{1}\neq \ell_{2}$, then we have $\sin \ell_{1}\neq 1$  since $\ell_{1}+\ell_{2}=\pi$. Applying spherical cosine law again, we have
$$
\cos \ell_{5}=\cos \ell_{6}=(\cos\ell_{i})^{2}+(\sin \ell_{i})^{2}\cos\beta=1+(\cos \beta-1)(\sin \ell_{i})^{2},  i=1,2.
$$
Therefore we know that 
\begin{equation}\label{e:con1}
\frac{\cos \ell_{5}-1}{\cos \beta-1}<1.
\end{equation}
In this case we also have $AC_{1}C_{2}=AC_{2}C_{1}=\pi-BC_{3}C_{4}=\pi-BC_{4}C_{3}\neq \pi/2$.
Since $AC_{2}D_{1}+AC_{1}D_{1}+ BC_{3}D_{2}+ BC_{4}D_{2}=4\pi$ and the two spherical triangles $C_{1}C_{2}D_{1}$ and $C_{3}C_{4}D_{2}$ are identical, we have 
$$
C_{1}C_{2}D_{1}+C_{2}C_{1}D_{1}=\pi(=C_{3}C_{4}D_{2}+C_{4}C_{3}D_{2}).
$$
Then denoting angle $C_{2}C_{1}D_{1}=\alpha$, by cosine law for the triangle $C_{1}C_{2}D_{1}$ we have 
$$
\cos \beta=-\cos \alpha \cos (\pi-\alpha) + \sin \alpha\sin(\pi-\alpha)\cos \ell_{5}=1+(\cos \ell_{5}-1)(\sin \alpha)^{2}
$$
hence 
$$
\frac{\cos \beta-1}{\cos \ell_{5}-1}\leq 1
$$
which contradicts~\eqref{e:con1}. Therefore this show that case (b) is not possible.

We now consider case (a) and show this gives $h=g_{s}$ for some $s$. Since $\ell_{1}=\ell_{2}$, the two 4-sided surfaces in Figure~\ref{pic2} are identical. So we have the following relations of angles: 
$$AC_{2}D_{1}=BC_{3}D_{2}, \ AC_{1}D_{1}=BC_{4}D_{2}.$$ 
Since these four angles add up to $4\pi$, this means that 
\begin{equation}\label{e:2pi}
AC_{2}D_{1}+AC_{1}D_{1}=2\pi, \ BC_{3}D_{2}+ BC_{4}D_{2}=2\pi. 
\end{equation} 
Note that when $AC_{2}D_{1}=AC_{1}D_{1}=BC_{3}D_{2}= BC_{4}D_{2}=\pi$, and $\ell_{3}=\ell_{4}=\pi-\ell_{1}$,  we have two bigons which satisfy~\eqref{e:2pi}.  
We now prove that this is the only possibility. 
If $\ell_{3}\neq \ell_{4}$, then the sum of the two angles $C_{1}C_{2}D_{1}+C_{2}C_{1}D_{1}$ will be either less than or bigger than the symmetric case by using Lemma~\ref{symmetry}, which will make the total sum $AC_{2}D_{1}+AC_{1}D_{1}<2\pi$ or $>2\pi$. Therefore we must have $\ell_{3}=\ell_{4}$. This way we get two bigons, which piece together to give $g_{s}$ where $s=\ell_{3}=\ell_{4}$. 
\end{proof}

We next show that if $\beta_{1}\neq \beta_{2}$, then the angle combination $\vec \beta$ cannot be realized. 
\begin{lemma}\label{lemma2}
If $\beta_{1}\neq \beta_{2}$, then $AC_{2}D_{1}+AC_{1}D_{1}+ BC_{3}D_{2}+ BC_{4}D_{2}\neq 4\pi$ in Figure~\ref{pic7}. 
\end{lemma}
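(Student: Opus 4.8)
The plan is to argue by contradiction along the same lines as Lemma~\ref{lemma1}, tracking how the asymmetry $\beta_1 \neq \beta_2$ forces an imbalance in the angle sum. Suppose $\beta_1 \neq \beta_2$ and that the four angles at the (would-be) $4\pi$ cone point nonetheless sum to $4\pi$. As in the previous lemma, I work at the intermediate stage of Figure~\ref{pic7}, with the two four-sided spherical pieces; connect $C_1C_2$ and $C_3C_4$ by geodesics of lengths $\ell_5$ and $\ell_6$ respectively. Now the spherical cosine law~\eqref{e:cosine} reads
$$
\cos\ell_5=\cos\ell_3\cos\ell_4+\sin\ell_3\sin\ell_4\cos\beta_1,\qquad
\cos\ell_6=\cos\ell_3\cos\ell_4+\sin\ell_3\sin\ell_4\cos\beta_2,
$$
so with $\beta_1\neq\beta_2$ we get $\ell_5\neq\ell_6$ in general (this is the crucial departure from Lemma~\ref{lemma1}, where $\ell_5=\ell_6$ was the starting point). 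Since the triangles $C_1C_2D_1$ and $C_3C_4D_2$ share the common side $D_1D_2$-type gluing but now have unequal third sides $\ell_5\neq\ell_6$ and unequal apex angles $\beta_1, \beta_2$, they cannot be congruent, and the bigon configuration is likewise excluded.

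The heart of the argument is a monotonicity estimate: I would express the total angle sum $S := AC_2D_1+AC_1D_1+BC_3D_2+BC_4D_2$ as a function of the remaining free lengths and show that $S=4\pi$ forces $\ell_5=\ell_6$ (equivalently $\beta_1=\beta_2$). Concretely, along the lines of the computation in Lemma~\ref{lemma1}, one uses spherical cosine law on each of the four small triangles to write each base angle $AC_iD_1$, $BC_jD_2$ in terms of $\ell_1,\ell_2,\ell_3,\ell_4$ and $\beta_1,\beta_2$, using $\ell_1+\ell_2=\pi$ (forced by the geometry at $C$, as in the previous proof). The symmetric/degenerate case $\ell_3=\ell_4$, all base angles $=\pi$, is the only one giving exactly $4\pi$ when $\beta_1=\beta_2$; I would show that perturbing to $\beta_1\neq\beta_2$ strictly changes $S$ in a definite direction, invoking Lemma~\ref{symmetry} (the convexity/monotonicity lemma already used above) applied separately to the two triangle pairs. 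Since the contribution of the piece with angle $\beta_1$ and the piece with angle $\beta_2$ now respond differently, their sum cannot return to $4\pi$: one obtains either $S<4\pi$ or $S>4\pi$, a contradiction.

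A clean way to organize this is to fix all lengths except those tied to $\beta_1$ versus $\beta_2$ and compare with the hypothetical equal-angle metric $g_s$: the equal-angle case pins $S=4\pi$ exactly on the one-parameter family, and any genuine asymmetry in the apex angles breaks this. The inequality $\dfrac{\cos\ell_5-1}{\cos\beta_1-1}$ versus $\dfrac{\cos\ell_6-1}{\cos\beta_2-1}$, together with the relation between $\ell_5$ (resp.\ $\ell_6$) and the base-angle sum of $C_1C_2D_1$ (resp.\ $C_3C_4D_2$) that was derived in Lemma~\ref{lemma1}, should give the strict inequality directly after a short computation.

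The main obstacle I anticipate is bookkeeping rather than conceptual: making the monotonicity precise requires identifying the correct monotone quantity (some combination of $\cos\ell_5$, $\cos\ell_6$, $\cos\beta_1$, $\cos\beta_2$ and the base-angle sums) and checking that the two pieces really do move the total in the \emph{same} direction, so that no cancellation can restore $S=4\pi$. One must also verify that the spherical triangle inequalities remain satisfiable throughout the perturbation (otherwise the configuration simply does not exist, which is fine for the conclusion but needs to be said). I expect Lemma~\ref{symmetry}, applied to each of the two triangles $C_1C_2D_1$ and $C_3C_4D_2$, to supply exactly the needed sign, reducing the remaining work to the same elementary cosine-law manipulations used in the proof of Lemma~\ref{lemma1}.
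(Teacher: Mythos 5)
Your proposal correctly identifies the general shape of the argument (contradiction, reduction to an extremal configuration via Lemma~\ref{symmetry}, then a strict inequality on the angle sum), but it stops exactly where the lemma's actual content begins, and it contains one false auxiliary claim. The claim that $\ell_{1}+\ell_{2}=\pi$ is ``forced by the geometry at $C$'' is not correct: in the glued-football metric $g_{t}$ one has $\ell_{1}=\ell_{2}=\pi-t$, so $\ell_{1}+\ell_{2}=\pi$ only in the special case $t=\pi/2$. In the proof of Lemma~\ref{lemma1} that relation held only in sub-case (b), where the two triangles were assumed to piece together into a bigon with antipodal vertices $A$ and $B$; it is not available here, and consequently the comparison of $\frac{\cos\ell_{5}-1}{\cos\beta_{1}-1}$ with $\frac{\cos\ell_{6}-1}{\cos\beta_{2}-1}$ that you propose to recycle from that sub-case has no analogous derivation in the present setting.

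The deeper gap is the one you yourself flag as ``the main obstacle'': checking that the two pieces move the total angle sum in the same direction so that no cancellation restores $4\pi$. That check is not bookkeeping --- it is the lemma. The paper resolves it by (i) observing that under a small perturbation of $g_{t}$ both $\ell_{1}$ and $\ell_{2}$ lie on the same side of $\pi/2$ (with a separate treatment of $t=\pi/2$), (ii) reducing via Lemma~\ref{symmetry} to the isosceles case $\ell_{3}=\ell_{4}=\ell$ and exploiting the resulting $\ZZ_{2}$ symmetry to pass to two right half-triangles, (iii) using the spherical sine rule \eqref{e:sine} to derive the ordering $\ell_{1}<\pi-\ell<\ell_{2}$ (or its reverse), and (iv) applying Napier's analogies to reduce the comparison of the two cotangents to the single sign condition $\sin\ell\cos\ell\sin\bigl[\tfrac{1}{2}(\ell_{1}-\ell_{2})\bigr]>0$, whose truth in each case is exactly what forces $\alpha_{1}+\alpha_{2}$ strictly to one side of $2\pi$. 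None of steps (i)--(iv) appears in your write-up; the monotone quantity you say ``should'' exist is precisely this expression, and without producing it the argument is a plan rather than a proof. I would also note that applying Lemma~\ref{symmetry} ``separately to the two triangle pairs'' requires knowing whether each isosceles extremal has base angle below or above $\pi/2$ (this is what decides whether it is a minimum or a maximum), which is again supplied by the case split in (i) and the ordering in (iii).
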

\begin{proof}
Without loss of generality, we assume $\beta_{1}=\beta-2\epsilon, \beta_{2}=\beta+2\epsilon, \epsilon>0$. 

With these assumptions, for any fixed $\ell_{1}$ and $\ell_{2}$, we will show that no choice of $\ell_{3}$ and $\ell_{4}$ would give the combination of $4\pi$. In fact, we will show that the total sum of the four angles is strictly less than (or bigger than) $4\pi$ depending on the length of $\ell_{1}$ and $\ell_{2}$.  

Since we are considering perturbation of a metric $g_{t}$, $\ell_{1}$ and $\ell_{2}$ are both close to $\pi-t$. Therefore unless $t=\pi/2$, we can assume $(\ell_{1}-\pi/2)(\ell_{2}-\pi/2)>0$. We will discuss the case of $t=\pi/2$ near the end of this proof.

We first look at the case when $\ell_{1}, \ell_{2}<\pi/2$. In this case $\ell_{3}, \ell_{4}>\pi/2$. We will show that even in the case when $\ell=\ell_{3}=\ell_{4}>\pi/2$ (which will give the maximal possible angle sum by Lemma~\ref{symmetry}), the total sum of the four angles in question is still less than $4\pi$. 

When $\ell_{3}=\ell_{4}$, the two pieces each have a $\ZZ_{2}$ symmetry, therefore we can consider a half of each piece, which give us two spherical triangles. See picture~\ref{pic5}.  

\begin{figure}[h]
\centering
\includegraphics[width=0.5\textwidth]{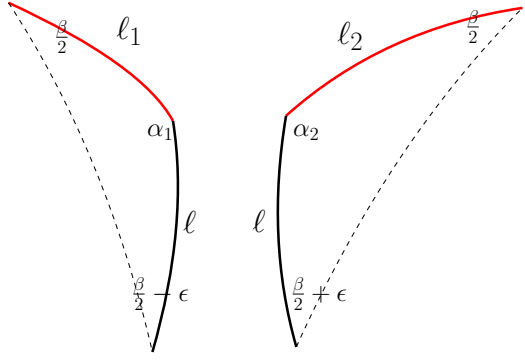}
 \caption{The two half pieces}
 \label{pic5}
\end{figure}

By spherical sine rule we have
\begin{equation}\label{e:sine}
\frac{\sin \ell_{1}}{\sin \frac{\beta_{1}}{2}}=\frac{\sin\ell}{\sin \frac{\beta}{2}}, \ \frac{\sin \ell_{2}}{\sin \frac{\beta_{2}}{2}}=\frac{\sin\ell}{\sin \frac{\beta}{2}}
\end{equation}
In particular, since $\sin \frac{\beta_{2}}{2}>\sin\frac{\beta}{2}>\sin\frac{\beta_{1}}{2}$ this implies that $\ell_{1}<\pi-\ell<\ell_{2}$. By Napier's analogies, 
\[
\cot (\frac{1}{2}\alpha_{1})=\frac{\tan (\frac{\beta}{2}-\frac{\beta_{1}}{2})\sin [\frac{1}{2}(\ell_{1}+\ell)]}{\sin [\frac{1}{2}(\ell-\ell_{1})]}, \ \cot (\frac{1}{2}\alpha_{2})=\frac{\tan (\frac{\beta}{2}-\frac{\beta_{2}}{2})\sin [\frac{1}{2}(\ell_{2}+\ell)]}{\sin [\frac{1}{2}(\ell-\ell_{2})]}.
\]
Since $\frac{\beta}{2}-\frac{\beta_{1}}{2}=-(\frac{\beta}{2}-\frac{\beta_{2}}{2})=\epsilon$, if we can show 
\[
\frac{\sin [\frac{1}{2}(\ell_{1}+\ell)]}{\sin [\frac{1}{2}(\ell-\ell_{1})]}>\frac{\sin [\frac{1}{2}(\ell_{2}+\ell)]}{\sin [\frac{1}{2}(\ell-\ell_{2})]}
\]
then this would imply $\alpha_{1}+\alpha_{2}<2\pi$, or equivalently $AC_{2}D_{1}+AC_{1}D_{1}+ BC_{3}D_{2}+ BC_{4}D_{2}<4\pi$. Now to show the above inequality holds, we just need to expand it and see that after cancellation it is equivalent to 
\begin{equation}\label{e:inequality}
\sin \ell \cos \ell \sin [\frac{1}{2}(\ell_{1}-\ell_{2})]>0
\end{equation}
which is true by our assumption $\ell>\pi/2$ and $\ell_{1}<\ell_{2}$.

Now for the other case when $\ell_{1}, \ell_{2}>\pi/2$. In this case we show that even in the minimum $\ell=\ell_{3}=\ell_{4}<\pi/2$, the total sum is still greater than $4\pi$. Note here we have $\ell_{1}>\pi-\ell>\ell_{2}$. To show $\alpha_{1}+\alpha_{2}>2\pi$, we need to have
\[
-\frac{\sin [\frac{1}{2}(\ell_{1}+\ell)]}{\sin [\frac{1}{2}(\ell-\ell_{1})]}>-\frac{\sin [\frac{1}{2}(\ell_{2}+\ell)]}{\sin [\frac{1}{2}(\ell-\ell_{2})]}
\]
and we arrive at the same inequality (note here $\sin [\frac{1}{2}(\ell-\ell_{i})]<0$)
\[
\sin \ell \cos \ell \sin [\frac{1}{2}(\ell_{1}-\ell_{2})]>0
\]
which holds this time since $\ell<\pi/2$ and $\ell_{1}-\ell_{2}>0$. And this shows even in the minimum case we still have $\alpha_{1}+\alpha_{2}>2\pi$.
Therefore the sum of the four angles is always bigger than $4\pi$ in this case, therefore cannot be realized. 

Finally we show that in the case when $t=\pi/2$, there is still no possible choice of perturbation. It is easy to show  that $\ell_{1}$ cannot be equal to $\pi/2$. If $\ell_{1}=\pi/2$ which implies $\cos \ell_{5}=\cos \beta$, then by applying cosine law to the triangle $C_{1}C_{2}D_{1}$ one gets $\cos \beta=(\cos \ell_{1})^{2}+(\sin \ell_{1})^{2}\cos (\beta-2\epsilon)$ and this will force $(\cos \ell_{1})^{2}<0$, so this is impossible. Therefore either $\ell_{1}<\pi/2$ or $\ell_{1}>\pi/2$. Then using the same sine rule~\eqref{e:sine}, we will have either $\ell_{1}<\pi-\ell <\ell_{2}\leq \pi/2$ or $\ell_{1}>\pi-\ell>\ell_{2}\geq \pi/2$. In particular we also have $\ell\neq \pi/2$. Then the same computation for~\eqref{e:inequality} would follow on both cases and we see that either $\alpha_{1}+\alpha_{2}<2\pi$ or $\alpha_{1}+\alpha_{2}>2\pi$.


This completes the proof.

\end{proof}

We now prove the fact we have used in the proofs above, that for a spherical triangle, if the length of one side and its opposite angle are fixed, then the extremal of the sum of its three angles is achieved by an isosceles spherical triangle.  Note that for a fixed $\ell$ and $\beta$ with $\cos \ell\neq \cos \beta$, there are two different isosceles triangles, one with angle $\alpha<\pi/2$ and the other with $\alpha'>\pi/2$. See picture~\ref{pic6}. 

\begin{figure}[h]
\centering
\includegraphics[width=0.4\textwidth]{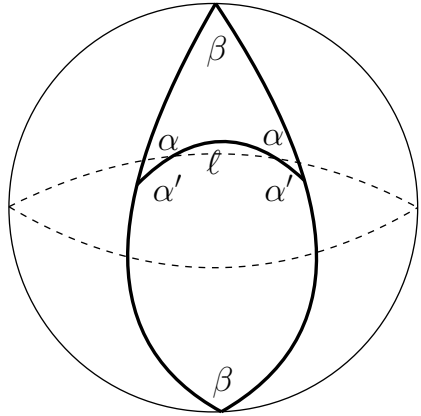}
 \caption{Two spherical isosceles triangles sharing one side and having the same opposite angle}
 \label{pic6}
\end{figure}

\begin{lemma}\label{symmetry}
For the spherical triangle $ABC$ in Figure~\ref{pic4}, if angle $ACB=\beta$ and length $AB=\ell\notin 2\pi \ZZ$ are fixed and they satisfy $\cos \ell\neq \cos \beta$, then the sum of two angles $CAB+CBA$ reaches its minimum and maximum
when $AC=BC$. Denoting angle $CAB=\alpha$, the minimum is achieved when $\alpha<\pi/2$, and the maximum is achieved when $\alpha>\pi/2$.
\end{lemma}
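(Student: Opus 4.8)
The plan is to turn the two-parameter extremization into a one-variable critical-point problem. Fix $\beta=\angle ACB$ and $\ell=AB$, and write $a=BC$, $b=CA$ for the remaining sides and $A=\angle CAB$, $B=\angle CBA$ for the remaining angles. By the spherical law of cosines, the family of triangles in question is exactly the level set $\{G(a,b)=\cos\ell\}$ with $G(a,b)=\cos a\cos b+\sin a\sin b\cos\beta$, inside $(a,b)\in(0,\pi)^2$. The first step is to express $A+B$ in closed form along this family. Rewriting the law of cosines in half-angle form as
\[
\sin^2\tfrac{a-b}{2}\cos^2\tfrac{\beta}{2}+\sin^2\tfrac{a+b}{2}\sin^2\tfrac{\beta}{2}=\sin^2\tfrac{\ell}{2},
\]
and combining it with Napier's analogy $\tan\frac{A+B}{2}=\cot\frac{\beta}{2}\cdot\frac{\cos\frac{a-b}{2}}{\cos\frac{a+b}{2}}$, I would form $\sec^2\frac{A+B}{2}=1+\tan^2\frac{A+B}{2}$ and simplify, the numerator collapsing to $\cos^2\frac{\ell}{2}$, to obtain
\[
\cos^2\tfrac{A+B}{2}=\frac{\sin^2\frac{\beta}{2}}{\cos^2\frac{\ell}{2}}\cos^2\tfrac{a+b}{2}.
\]
Since $\frac{A+B}{2}\in(0,\pi)$ and $\theta\mapsto\cos\theta$ is a strictly decreasing bijection of $(0,\pi)$ onto $(-1,1)$, extremizing $A+B$ along the family is equivalent to extremizing $a+b$ along it.

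Next I would locate the critical points of $a+b$ on $\{G=\cos\ell\}$ by Lagrange multipliers. The condition $\nabla(a+b)\parallel\nabla G$ is $G_a=G_b$, which simplifies to $\sin(a-b)(1+\cos\beta)=0$; since $1+\cos\beta\ne0$ for a nondegenerate triangle, this forces $a=b$. Hence the only interior critical configurations are the isosceles ones, and there are exactly two of them, solving $\cos^2 a_0=\frac{\cos\ell-\cos\beta}{1-\cos\beta}$, one with $a_0<\pi/2$ and one with $a_0>\pi/2$. (The hypothesis $\cos\ell\neq\cos\beta$ is precisely what keeps $\cos a_0\neq0$.) Tracking signs in Napier's analogy, these two correspond to $\alpha:=\frac{A+B}{2}$ being acute, resp.\ obtuse, at the isosceles triangle. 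To check that each is a genuine extremum and to tell them apart, I would compute the second variation of $a+b$ along the constraint: parametrizing $b=b(a)$ near $a_0$ one has $b'(a_0)=-1$ by symmetry, and using $G_{aa}(a_0,a_0)=-\cos\ell$, $G_{ab}(a_0,a_0)=\sin^2 a_0+\cos^2 a_0\cos\beta$, $G_b(a_0,a_0)=\sin a_0\cos a_0(\cos\beta-1)$ together with the isosceles law of cosines, the computation collapses to
\[
(a+b)''(a_0)=\frac{2(1+\cos\beta)}{\sin a_0\cos a_0(\cos\beta-1)},
\]
whose sign equals $-\sign(\cos a_0)$. Combined with the identity of the first step, this shows $A+B$ has a strict local extremum at each of the two isosceles triangles and identifies which one realizes the maximum and which the minimum of $A+B$, according to whether the common base angle $\alpha$ is acute or obtuse, as in the statement.

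Finally I would dispose of the boundary. The closure of the family meets $\partial\big((0,\pi)^2\big)$ only in degenerate triangles: as $a\to0$ (resp.\ $a\to\pi$) one has $b\to\ell$ (resp.\ $b\to\pi-\ell$), and the identity of the first step gives $A+B\to\pi-\beta$ (resp.\ $\pi+\beta$); positivity of the spherical excess on nondegenerate triangles puts these limiting values on the correct side of the two critical values, so on each connected component of the family the extremum of $A+B$ is attained in the interior, at the isosceles point. I expect the genuinely delicate part to be exactly this last step of bookkeeping: $\{G=\cos\ell\}$ is a union of open arcs with degenerate endpoints rather than a single closed curve, so one has to argue component by component and keep the signs of $\cos a_0$, $\cos\frac{a+b}{2}$ and $\cot\frac{\beta}{2}$ straight in order to pair the two isosceles solutions correctly with the maximum and the minimum. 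The trigonometric reduction in the first step is routine once the law of cosines is put in half-angle form, and everything after that is elementary calculus.
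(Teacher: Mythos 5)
Your route is genuinely different from the paper's: the paper fixes $\ell$ and $\beta$, writes the angle cosine rule $\cos\beta=-\cos\alpha\cos(s-\alpha)+\sin\alpha\sin(s-\alpha)\cos\ell$ with $s=CAB+CBA$, and locates and classifies the critical points by implicit differentiation in the single angle variable $\alpha$; you instead work in the side variables $(a,b)$ on the level set of the side cosine rule, reduce $A+B$ to $a+b$ via the identity $\cos^2\frac{A+B}{2}=\frac{\sin^2(\beta/2)}{\cos^2(\ell/2)}\cos^2\frac{a+b}{2}$, and then run Lagrange multipliers plus a second variation. I checked your half-angle identity, the computation $G_a-G_b=-\sin(a-b)(1+\cos\beta)$, and the second-variation formula $(a+b)''(a_0)=\frac{2(1+\cos\beta)}{\sin a_0\cos a_0(\cos\beta-1)}$; all are correct, and your attention to the boundary and to the connected components of the level set is a genuine improvement, since the family is disconnected and each isosceles point is an extremum only on its own component.

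However, there is a sign problem at the last step, where you assert that the conclusion is ``as in the statement.'' Your formula gives $\sign\big((a+b)''(a_0)\big)=-\sign(\cos a_0)$; along the family $A+B$ is an \emph{increasing} function of $a+b$ (by your identity, with $\sign\cos\frac{A+B}{2}=\sign\cos\frac{a+b}{2}$ from Napier's analogy); and $\cot\alpha=\tan\frac{\beta}{2}\cos a_0$ shows $a_0<\pi/2\iff\alpha<\pi/2$. So your computation says the isosceles triangle with \emph{acute} base angle is a local \emph{maximum} of $CAB+CBA$ and the obtuse one a local minimum --- the opposite of the lemma. A numerical check settles this in favor of your formulas: for $\beta=\pi/2$, $\ell=\pi/3$ the acute isosceles triangle has $a_0=b_0=\pi/4$ and angle sum $2\arccos(1/\sqrt 3)\approx 109.47^\circ$, while the nearby triangle in the same family with $a=\pi/4+0.1$ has angle sum $\approx 108.45^\circ$, so the isosceles point is a maximum there, not a minimum. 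In other words, the lemma as printed (and the sign analysis of $s'$ at the end of the paper's own proof, where $D(\alpha_0)=\sin\alpha_0\cos\alpha_0(1+\cos\ell)$ has the sign of $\cos\alpha_0$) has minimum and maximum interchanged. You should have flagged this contradiction between your own formulas and the statement rather than asserting agreement; as written, your proposal does not prove the statement, it proves the statement with ``minimum'' and ``maximum'' swapped.
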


\begin{figure}[h]
\centering
\includegraphics[width=0.4\textwidth]{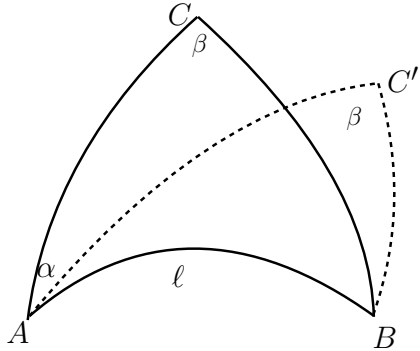}
 \caption{The extremal value of the total angle sum is achieved by an isosceles triangle}
 \label{pic4}
\end{figure}

\begin{proof}
Let $s$ be the sum of the two angles $CAB+CBA$, and let $\alpha$ be the angle $CAB$.  Using the spherical cosine law, we have
\[
\cos \beta=-\cos \alpha \cos (s-\alpha) + \sin \alpha \sin (s-\alpha)\cos \ell
\]
Here we treat $s$ as an implicit function of $\alpha$. Differentiating the two sides with respect to $\alpha$, we get
\[
\begin{split}
0=&\sin \alpha \cos (s-\alpha) +\cos\alpha \sin(s-\alpha)\cos \ell \\ &+(s'-1)[\cos \alpha \sin (s-\alpha) + \sin \alpha \cos (s-\alpha)\cos \ell]
\end{split}
\]
The critical point of $s$ is obtained when $s'=0$, which corresponds to
$$
(1-\cos \ell)\sin (2\alpha-s)=0.
$$
With the conditions $\alpha>0$ and $s-\alpha>0$, we obtain that the critical point satisfies $\alpha=s/2$, i.e. it is an isosceles triangle. 

On the other hand, by analyzing the sign of
$$
s'=1-\frac{\sin \alpha \cos (s-\alpha) +\cos\alpha \sin(s-\alpha)\cos \ell }{\cos \alpha \sin (s-\alpha) + \sin \alpha \cos (s-\alpha)\cos \ell}
$$
we obtain the following two cases:
\begin{itemize}
\item When $\alpha_{0}=s/2>\pi/2$, we have $s'>0$ when $\alpha<\alpha_{0}$ and $s'<0$ when $\alpha>\alpha_{0}$, hence it is a maximum point.
\item When $\alpha_{0}=s/2<\pi/2$, the signs are reversed and hence it is a minimum point for $s$.
\end{itemize}
\end{proof}

\begin{remark}
When $\cos \beta=\cos \ell$, the critical point corresponds to the triangle with angles $\beta, \pi/2, \pi/2$ (so the two extremal triangles are identical). However in this case this point is neither minimum nor maximum, as the family of triangles are given by any combination of $\alpha=\pi/2$ and $s-\alpha\in (0,\pi)$. The proofs of lemma 1 and lemma 2 do not need to use this fact. 
\end{remark}

\section{Proof of Theorem}\label{s:full}
Now we give the proof of the full theorem.
\begin{proof}[Proof of Theorem 1]
When $\alpha=\beta$, we combine Lemma~\ref{lemma1} and Lemma~\ref{lemma2}, and see that the only possibility is the gluing of two footballs. 

When $\alpha\neq \beta$, by replacing the statement $\beta_{1}=\beta_{2}$ to 
$$\beta_{1}=\alpha, \beta_{2}=\beta,$$ 
the proof is similar. We list out the steps as follows.

\noindent\textbf{Step 1: The only possible case is $\alpha=\beta_{1}, \beta=\beta_{2}$.}

Assuming $\beta_{1}=\alpha-2\epsilon$ and $\beta_{2}=\beta+2\epsilon$ for some small $\epsilon$. Similar to Lemma~\ref{lemma2}, we look at the extremal case which is achieved when $\ell_{3}=\ell_{4}$ by Lemma~\ref{symmetry}. In this case we get a similar picture as Figure~\ref{pic5} except the one of the two angles $\beta/2$ is replaced by $\alpha/2$. Using the same computation of Napier's analogies, we get that the sum of $\alpha_{1}+\alpha_{2}$ is either strictly bigger than $4\pi$ in the minimal case or strictly smaller than $4\pi$ in the maximal case. Hence this is not possible when $\epsilon\neq 0$.

\noindent\textbf{Step 2: When $\alpha=\beta_{1}, \beta=\beta_{2}$ we get $g_{s}$ for some $s$.}

We then show that when $\alpha=\beta_{1}, \beta=\beta_{2}$ in Figure~\ref{pic1}, the only possible configuration is indeed the gluing of two footballs. 
\begin{figure}[!h]
\centering
\includegraphics[width=0.9\textwidth]{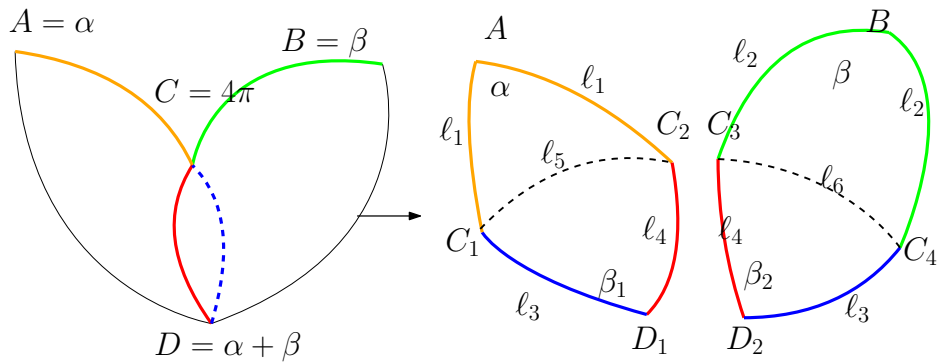}
 \caption{Cutting the manifold into two pieces, each is a surface with 4 sides. A priori they do not need to be spherical bigons as in the glued football construction.}
 \label{pic1}
\end{figure}

When $\alpha \neq \beta$, one no longer has $\ell_{5}= \ell_{6}$ as in the proof of Lemma~\ref{lemma1}. However, by Lemma~\ref{symmetry}, one can see that only the case of $\ell_{3}=\ell_{4}$ would give the possible sum of $4\pi$ (otherwise it would either be bigger or smaller than $4\pi$). Then again by symmetry, we can see that the only possible case is when two spherical triangles $AC_{1}C_{2}$ and $C_{1}C_{2}D_{1}$ (similarly, $B C_{3}C_{4}$ and $C_{3}C_{4}D_{2}$) piece together to a bigon. And this correspond to the gluing of two footballs.
\end{proof}


\begin{thebibliography}{99999}
\bibitem{BMM}
Daniele Bartolucci,  Francesca De Marchis, and Andrea Malchiodi.
\newblock Supercritical conformal metrics on surfaces with conical singularities. 
\newblock {\em Int. Math. Res. Not.}, no. 24, 5625--5643, 2011.


\bibitem{BM}
Daniele Bartolucci, and Andrea Malchiodi. 
\newblock{An improved geometric inequality via vanishing moments, with applications to singular Liouville equations.}
\newblock{\em Communications in Mathematical Physics} 322, no. 2 (2013): 415-452.



\bibitem{CM2}
Alessandro Carlotto, and Andrea Malchiodi.
\newblock Weighted barycentric sets and singular Liouville equations on compact
  surfaces.
\newblock {\em Journal of Functional Analysis}, 262(2):409--450, 2012.

\bibitem{Carlotto}
Alessandro Carlotto.
\newblock{On the solvability of singular Liouville equations on compact surfaces of arbitrary genus.}
\newblock{\em Transactions of the American Mathematical Society} 366, no. 3 (2014): 1237-1256.

\bibitem{CL2}
Chiun-Chuan Chen, and Chang-Shou Lin. 
\newblock{Topological degree for a mean field equation on Riemann surfaces.}
\newblock{\em Communications on Pure and Applied Mathematics} 56, no. 12 (2003): 1667-1727.

\bibitem{CL3}
Chiun-Chuan Chen,  and Chang-Shou Lin. 
\newblock{Mean field equation of Liouville type with singular data: topological degree.}
\newblock{\em Communications on Pure and Applied Mathematics} 68, no. 6 (2015): 887-947.


\bibitem{CCW}
Qing Chen, Xiuxiong Chen, and Yingyi Wu. 
\newblock{The structure of HCMU metric in a K-surface.} 
\newblock{\em International Mathematics Research Notices} 2005, no. 16 (2005): 941-958.

\bibitem{CWWX}
Qing Chen,  Wei Wang, Yingyi Wu, and Bin Xu. 
\newblock{Conformal metrics with constant curvature one and finitely many conical singularities on compact Riemann surfaces.}
\newblock{\em Pacific Journal of Mathematics} 273, no. 1 (2014): 75-100.

\bibitem{Chen}
Xiuxiong Chen.
\newblock{Obstruction to the existence of metric whose curvature has umbilical Hessian in a $K$-surface.}
\newblock{\em Communications in Analysis and Geometry} 8, no. 2 (2000): 267-299.

\bibitem{Dey}
Subhadip Dey.
\newblock Spherical metrics with conical singularities on 2-spheres. 
\newblock {\em Geometriae Dedicata} (2017): 1-9.

\bibitem{Ere2}
Alexandre Eremenko. 
\newblock Metrics of positive curvature with conic singularities on the sphere. 
\newblock {\em Proceedings of the American Mathematical Society},  132.11 (2004): 3349-3355.



\bibitem{Ere1}
Alexandre Eremenko.
\newblock Co-axial monodromy.
\newblock {\em arXiv: 1706.04608v1}, 2017.

\bibitem{EG}
Alexandre Eremenko,  and Andrei Gabrielov. 
\newblock{On metrics of curvature $1$ with four conic singularities on tori and on the sphere.}
\newblock{\em Illinois Journal of Mathematics} 59, no. 4 (2015): 925-947.

\bibitem{EGT}
Alexandre Eremenko, Andrei Gabrielov, and Vitaly Tarasov. 
\newblock Metrics with conic singularities and spherical polygons. 
\newblock {\em Illinois Journal of Mathematics}, 58.3 (2014): 739-755.


\bibitem{EGT2}
Alexandre Eremenko, Andrei Gabrielov, and Vitaly Tarasov. 
\newblock{Metrics with four conic singularities and spherical quadrilaterals.}
\newblock{\em Conformal Geometry and Dynamics of the American Mathematical Society} 20, no. 8 (2016): 128-175.


\bibitem{Ka}
Michael Kapovich.
{\em Branched covers between spheres and polygonal inequalities in simplicial trees.} 
2017.

\bibitem{LT}
Feng Luo, and Gang Tian.
\newblock Liouville equation and spherical convex polytopes.
\newblock {\em Proceedings of the American Mathematical Society},
  116(4):1119--1129, 1992.

\bibitem{MW}
Rafe Mazzeo, and Hartmut Weiss.
\newblock Teichm\"uller theory for conic surfaces.
\newblock In {\em Geometry, Analysis and Probability, In Honor of Jean-Michel
  Bismut}, volume 310 of {\em Progress in Mathematics}, pages 127--164.
  Birkh\"auser Basel, 2017.

\bibitem{MZ}
Rafe Mazzeo, and Xuwen Zhu.
\newblock{Conical metrics on Riemann surfaces, I: the compactified configuration space and regularity.}
 \newblock{\em arXiv:1710.09781}, 2017.


\bibitem{MZ2}
Rafe Mazzeo, and Xuwen Zhu.
\newblock{Conical metrics on Riemann surfaces, II: spherical metrics.}
In preparation.

\bibitem{McOwen}
Robert~C McOwen.
\newblock Point singularities and conformal metrics on Riemann surfaces.
\newblock {\em Proceedings of the American Mathematical Society},
  103(1):222--224, 1988.

\bibitem{MP}
Gabriele Mondello, and Dmitri Panov.
\newblock Spherical metrics with conical singularities on a 2-sphere: angle
  constraints.
\newblock {\em International Mathematics Research Notices},
  2016(16):4937--4995, 2016.
  
\bibitem{MP2}
Gabriele Mondello, and Dmitri Panov.
\newblock Spherical surfaces with conical points: systole inequality and moduli spaces with many connected components.
\newblock{\em arXiv:1807.04373}, 2018


\bibitem{SCLX}
Jijian Song,  Yiran Cheng, Bo Li, and Bin Xu. 
\newblock{Drawing cone spherical metrics via Strebel differentials.}
\newblock{\em International Mathematics Research Notices}, Vol. 00, No. 0, pp. 1-23

\bibitem{SLX}
Jijian Song, Lingguang Li, and Bin Xu. 
\newblock{Cone spherical metrics and stable vector bundles.}
\newblock{\em arXiv:1808.04106}, 2018.

\bibitem{Tr}
Marc Troyanov.
\newblock Prescribing curvature on compact surfaces with conical singularities.
\newblock {\em Transactions of the American Mathematical Society},
  324(2):793--821, 1991.  
  
\bibitem{Tr2}
Marc Troyanov.
\newblock Metrics of constant curvature on a sphere with two conical singularities.
\newblock {\em Lect. Notes Math}, 
1410 (1989): 296-308.

\bibitem{UY}
Masaaki Umehara, and Kotaro Yamada. 
\newblock Metrics of constant curvature 1 with three conical singularities on the 2-sphere. 
\newblock {\em Illinois Journal of Mathematics} 44.1 (2000): 72-94.




\end{thebibliography}
\end{document}